\newtheorem{theorem}{Theorem}[section]
\newtheorem{lemma}[theorem]{Lemma}
\newtheorem{remark}[theorem]{Remark}
\numberwithin{equation}{section}
\newcommand{\mc}[1]{{\mathcal #1}}
\newcommand{\ms}[1]{{\mathscr #1}}
\newcommand{\bb}[1]{{\mathbb #1}}
\newcounter{as}[section]
\begin{document}

\title{Condensation of the invariant measures of the supercritical zero range processes}

\author{Tiecheng Xu}

\address{\noindent IME-USP, Rua do Mat\~ao 1010, CEP 05508-090, S\~ao Paulo, Brazil.
  \newline e-mail: \rm \texttt{tc.xu@ime.usp.br} }

\begin{abstract}
 For $\alpha\geq 1$, let $g:\bb N\to\bb R_+$ be given by $g(0)=0$, $g(1)=1$, $g(k)=(k/k-1)^\alpha$, $k\geq 2$. Consider the symmetric nearest neighbour zero range process on  the discrete torus $\bb T_L$ in which a particle jumps from a site, occupied by $k$ particles, to one of its neighbors with rate $g(k)$. Armend\'ariz and Loulakis\cite{al09} proved a strong form of the equivalence of ensembles for the invariant measure of the supercritical zero range process when $\alpha>2$. We generalize their result to all $\alpha\geq 1$.  
\end{abstract}

\keywords{Zero range processes, Condensation, Metastability} 

\maketitle

\section{Introduction}
\label{sec1}

The zero range process is one of the most classical models of interacting particles systems. 
It was firstly introduced into the mathematical literature as an example of interacting Markov process at 1970 in \cite{s70}. 
Since then this model has received sustained attention, 
and plenty of mathematical achievements have been made
 including existence theorems\cite{l73}, invariant measures\cite{a82, al09},
  hydrodynamic limit\cite{kl99, g15},  metastability\cite{bl12,agl17}, etc. 

In the general setting the zero range process is a model in which many indistinguishable particles occupy sites on a lattice. 
Each site of the lattice may contain some number of particles and these particles jump between neighbouring sites with a rate $g(\cdot)$ that depends on the number of particles at the site of departure.  
With different jump rates and lattice on which the process is defined, the model may present different phenomena.  In this paper we focus on the invariant measure of the  zero range process on the one-dimensional discrete torus with $L$ sites.

The critical density, denoted by $\rho_c$, plays an important role in the study of invariant measure. Consider the nearest neighbour symmetric zero range process on $\bb T_L$ with $N$ particles and denote its invariant measure by $\mu_{N,L}$. In the subcritical regime, i.e. $N/L\to\rho<\rho_c$, we have the well known equivalence of ensembles: the marginals of the canonical measure $\mu_{N,L}$ converges to the marginals of the grand canonical product measure with mean density $\rho$.

In the supercritical regime, if the function $g(\cdot)$ decreases fast enough, the invariant measure of the process concentrates on configurations where a large portion of the total particles stays at a single site. This is called the condensation phenomenon. Ferrari, Landim and Sisko studied this condensation phenomenon of the invariant measure in \cite{fls07}, where they consider the zero range process on a fixed finite set while the number of particles $N$ grows to infinity. Grosskinsky, Sch\"utz, Spohn\cite{gss03} and then Armendariz and Loulakis \cite{al09} generalized that result to the process on an increasing domain $\bb T_L$, proving that if $N/L\to\rho>\rho_c$, then the distribution of the particles outside the condensate converges to the grand canonical distribution with critical density $\rho_c$ in different senses of convergence.

Choose $g(\cdot)$ to be the function defined in \eqref{g}. Both results of \cite{fls07} and \cite{al09} apply to the zero range process with $\alpha>2$, where $\alpha$ is the parameter in the definition of $g$. There are also some works investigating the dynamical aspects of the condensation phenomenon, in other words, the evolution of the location with the majority of particles. This problem for the zero range process  on a fixed finite set was solved by Beltr\'an and Landim\cite{bl12}, surprisingly for not just for $\alpha>2$ but for all $\alpha>1$.The problem in the case $\alpha=1$ was recently solved by Landim, Marcondes and Seo\cite{lms20}.  Armend\'ariz,  Grosskinsky and Loulakis\cite{agl17} extended the result of \cite{bl12} to the supercritical zero range process on $\bb T_L$ for $\alpha>20$. We believe that the condensation phenomenon for the zero range process on $\bb T_L$ should occur for all $\alpha\geq1$, which is the same as the zero range process on a fixed finite set, as long as the number of particles $N$ is sufficiently large compared to the number of sites $L$. 

Motivated by the reason above, our work extends  the result\cite{al09}  of Armend\'ariz and Loulakis to all $\alpha\geq 1$. More precisely we give sufficient conditions for which $N$ and $L$ need to satisfy according to the value of $\alpha$ to observe the condensation phenomenon,  and find proper product measures to approximate the distribution of particles in the remaining sites after removing the site with the most particles. The product measure depends on $N$ and $L$, which is not the same as the case $\alpha>2$ in \cite{al09}. However, since the critical density $\rho_c$ is finite when $\alpha>2$, our result would imply the result of \cite{al09}.

\section{Model and Results}\label{sec2}
\subsection{Basic notation}
Consider two sequences of non-negative numbers $\{a_L: L\in \bb Z_+\}$ and $\{b_L: L\in \bb Z_+\}$. We write $a_L=O(b_L)$ if there exists a positive constant $C>0$ and a postive integer $L_0$ such that $a_L\leq C b_L$ for all $L\geq L_0$, and write $a_L=o(b_L)$ if $\lim_{L\to\infty}a_L/b_L=0$. Sometimes we also use $a_L\ll b_L$ or $b_L\gg a_L$ to represent $a_L=o(b_L)$. We use notation $a_L=\Theta(b_L)$ if there exist positive constants $c,C$ and a postive integer $L_0$ such that $c b_L\leq a_L\leq C b_L$ for all $L\geq L_0$
\subsection{Zero range process on $\bb T_L$}
For each positive integer $L>0$, denote by $\bb T_L$ the one-dimensional discrete torus with $L$ sites:
$$\bb T_L\,=\, \bb Z/L\bb Z\,=\,\{1,2,\cdots, L\}.$$
For each $\alpha\in \bb R$, define a function $g:\bb N\to \bb R$:
\begin{equation}\label{g}
g(0)=0, \quad g(1)=1, \quad \text{and}\quad g(n)=\frac{n^\alpha}{(n-1)^\alpha}, \quad n\geq 2.
\end{equation}
Define $a(n)=\prod_{i=1}^n g(i)=n^\alpha$ for $n\geq 1$ and set $a(0)=1$. 

 Let $X_L=\bb N^{\bb T_L}$. Consider the nearest neighbour symmetric zero range process $\{\eta^{L}(t): t\geq 0\}$ with state space $X_{L}$ whose generator $\ms L_L$ acts on functions $F:X_L\to \bb R$ as 
\begin{equation}\label{gen}
(\ms L_L F)(\eta)\,=\,\sum_{\substack{x\in \bb T_L\\ |y-x|=1}}\frac{1}{2}\,g(\eta_x)\,\left\{F(\sigma^{x,y}\eta)-F(\eta)\right\}.
\end{equation}
In the above equation \eqref{gen}, if $\eta_x>0$, $\sigma^{x,y}\eta$ is the configuration obtained from $\eta$ by moving one particle from site $x$ to $y$:
\begin{equation} 
(\sigma^{x,y}\eta)_z=
\begin{cases}
\eta_x-1, & \text{for} \quad z=x,\\
\eta_y+1, & \text {for} \quad z=y,\\
\eta_z, & \text{otherwise}.
\end{cases}
\end{equation}
 For positive integers $N,L \geq 1$, define the set of configurations with $N$ particles staying at $\bb T_L$ by:
\begin{equation*}
E_{N,L}\,=\,\{\eta\in\bb N^{\bb T_L}:\sum_{x\in S_0} \eta_x \,=\, N\},
\end{equation*}
where $\bb N\,=\,\{0,1,2,\cdots\}$. Then $X_L$ is the union of disjoint sets $E_{N,L}$ over all $N\geq 1$. Note that this dynamics conserves the number of particles. Therefore we could restrict
the process $\{\eta^{L}(t): t\geq 0\}$ on the hyperplane $E_{N,L}$ and denoted this restricted process by $\{\eta^{N,L}(t): t\geq 0\}$.

\subsection{The canonical measure and grand canonical measure}
The Markov process $\{\eta^{N,L}(t): t\geq 0\}$ is irreducible and reversible with respect to its unique invariant measure $\mu_{N,L}$ given by:
\begin{equation}
\mu_{N,L}(\eta)\,=\,\frac{1}{Z_{N,L}}\prod_{x\in \bb T_L}\frac{1}{a(\eta_x)},
\end{equation}
where $Z_{N,L}$ is the normalizing constant 
\begin{equation}\label{defZNL}
Z_{N,L}\,=\,\sum_{\eta\in E_{N,L}}\prod_{x\in \bb T_L}\frac{1}{a(\eta_x)}.
\end{equation}
The measure $\mu_{N,L}$ is the so-called canonical measure.

Define a function $Z:\bb R_+\to\bb R_+$ by
$$Z(\varphi)\,=\, \sum_{k\geq 0}\frac{\varphi^k}{a(k)}.$$
Let $\varphi_c$ be the radius of convergence of function $Z$, then a simple computation shows that $\varphi_c=1$ for all $\alpha\in\bb R$. It is easy to check that $Z(\varphi_c)<\infty$ if and only if $\alpha>1$. For each integer $N\geq 1$ and $\varphi\in (0,\infty)$, in order to deal with the case $\alpha\in[1,2]$ later,  we also need to define the truncation of $Z(\varphi)$ by
$$Z_N(\varphi)\,:=\, \sum_{k=0}^N\frac{\varphi^k}{a(k)}.$$
For each $\alpha\in\bb R$, let $D_\alpha$ be the set of positive $\varphi$ such that $Z(\varphi)<\infty$. Given any $\varphi\in D_\alpha$, we can define the following probability measure on $\bb N$:
$$\nu_{\varphi}[k]\,=\,\frac{1}{Z(\varphi)}\frac{\varphi^k}{a(k)}, \quad k\geq 0.$$
Let $\nu_\varphi^L$ be the product measure on $X_L$ with marginals $\nu_\varphi$. It is well known that $\nu_\varphi^L$ is invariant for the process $\{\eta^L(t):t\geq 0\}$. The measure $\nu_\varphi^L$ is called the grand canonical measure. The following identity relates measures $\mu_{N,L}$ and $\nu_{\varphi}^L$:
\begin{equation*}
\mu_{N,L}[\eta]\,=\,\nu_{\varphi}^L\left[\,\eta\,\Big\vert\sum_{x\in \bb T_L}\eta_x=N\right], \quad \eta\in E_{N,L}.
\end{equation*}
Note that the right hand side of the above equation does not depend on $\varphi$.
For each integer $N\geq 1$ and $\varphi\in(0,\infty)$, we also define the measure $\nu_{\varphi, N}$ by
$$\nu_{\varphi,N}[k]\,=\,\frac{1}{Z_N(\varphi)}\frac{\varphi^k}{a(k)}, \quad 0\leq k\leq N$$
and define the measure $\nu_{\varphi,N}^L$ to be the product measure on $X_L$ with marginals $\nu_{\varphi,N}.$

Given $\varphi\in D_\alpha$, the expected number of particles per site under measure $\nu_{\varphi}^L$ is given by
$$\rho(\varphi)\,=\,\frac{1}{Z(\varphi)}\sum_{k\geq 0}\frac{k\varphi^k}{a(k)}.$$
It is easy to verify that $\rho$ is a strictly increasing function of $\varphi$. Define the critical density $\rho_c$ by $\rho_c:=\lim_{\varphi\uparrow\varphi_c} \rho(\varphi)$. One can check that $\rho_c<\infty$ if and only if $\alpha>2$.

\subsection{Condensation} For a configuration $\eta\in X_L$, define 
$$M_L(\eta)\,:=\,\max_{x\in \bb T_L}\eta_x$$
and let $m_L(\eta)$ be the location of the maximum. In the case there exist more than one maximum, choose randomly with equal probability one of the sites of maximum to be $m_L(\eta)$. Define the map $T:X_L\to X_{L-1}$ that removes the site of maximum. For example, given a configuration $\eta\in X_L$ and suppose $m_L(\eta)=k$, then $T\eta=(\eta_1,\cdots,\eta_{k-1},\eta_{k+1},\cdots,\eta_L)$.
The following result is obtained by I.Armend\'ariz and M.Loulakis\cite{al09}, even though they consider a  function $g(\cdot)$ slightly different from ours.
\begin{theorem}\label{thmIM}
Let $\mc F_L$ be the $\sigma$-algebra generated by $\eta_1,\cdots,\eta_L$. Assume that $\alpha>2$ and
$$\lim_{N,L\to\infty}\frac{N}{L}\,=\,\rho>\rho_c.$$
Then
$$\lim_{\substack{N,L\to\infty\\N/L\to\rho}}\sup_{A\in\mc F_{L-1}}\left|\mu_{N,L}[T^{-1}A]\,-\,\nu_{\varphi_c}^{L-1}[A]\right|\,=\,0.$$
\end{theorem}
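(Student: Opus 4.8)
The plan is to recognize that the displayed supremum is exactly $\tfrac12\|\,T_*\mu_{N,L}-\nu_{\varphi_c}^{L-1}\|_{\mathrm{TV}}$, where $T_*\mu_{N,L}$ is the law of $T\eta$, and so to control the total variation distance between these two measures on $X_{L-1}$. Let $X_1,X_2,\dots$ be i.i.d.\ with law $\nu_{\varphi_c}$, write $S_k=X_1+\cdots+X_k$, and recall the two facts that drive everything: since $\varphi_c=1$ and $a(k)=k^\alpha$ one has the \emph{exact} power law $\nu_{\varphi_c}[k]=1/(Z(\varphi_c)k^{\alpha})$ for $k\ge1$, and the identity $Z_{N,L}=Z(\varphi_c)^{L}\,\bb P[S_L=N]$ following from the relation between $\mu_{N,L}$ and $\nu_{\varphi_c}^{L}$. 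First I would compute the pushforward explicitly. For $\xi\in X_{L-1}$ set $|\xi|=\sum_i\xi_i$ and $v=N-|\xi|$; a configuration $\eta$ satisfies $T\eta=\xi$ only if the deleted site carries the value $v\ge\max_i\xi_i$, and in the generic case $v>\max_i\xi_i$ the maximum is unique, the fibre has exactly $L$ elements, and a direct count combined with the identity above gives
\begin{equation*}
\mu_{N,L}[T\eta=\xi]\;=\;\nu_{\varphi_c}^{L-1}[\xi]\,R(\xi),\qquad
R(\xi):=\frac{L\,\nu_{\varphi_c}[\,N-|\xi|\,]}{\bb P[S_L=N]},
\end{equation*}
on the set $B:=\{\xi:\,N-|\xi|>\max_i\xi_i\}$, while on $B^{c}$ the left side is either $0$ (when $v<\max_i\xi_i$) or a tie term (when $v=\max_i\xi_i$). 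Consequently the total variation distance is at most $\tfrac12\sum_{\xi\in B}\nu_{\varphi_c}^{L-1}[\xi]\,|R(\xi)-1|$ plus the $B^{c}$ contribution.

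The second step disposes of $B^{c}$. The event $B^{c}=\{N-|\xi|\le\max_i\xi_i\}$ forces an atypically large coordinate, and a union bound over which coordinate exceeds $(N-\rho_cL)/2$ together with the power-law tail gives $\nu_{\varphi_c}^{L-1}[B^{c}]=O(L^{2-\alpha})\to0$; the matching tie/overflow mass of $\mu_{N,L}$ vanishes for the same reason, since under $\mu_{N,L}$ the maximum is the condensate of size $\approx N-\rho_cL$ and is unique with high probability. The heart of the matter is therefore to show $R\to1$ in $L^{1}(\nu_{\varphi_c}^{L-1})$ on $B$. I would fix a window $G=\{\,|\,|\xi|-\rho_c(L-1)\,|\le\delta_L\,\}$ with $L^{1/(\alpha-1)}\ll\delta_L\ll L$, a choice available precisely because $\alpha>2$. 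On $G$ one has $N-|\xi|=(N-\rho_cL)(1+o(1))$, so the exact power law yields $\nu_{\varphi_c}[N-|\xi|]=\nu_{\varphi_c}[N-\rho_cL]\,(1+o(1))$ uniformly; writing $R^{\ast}$ for the value of $R$ at $|\xi|=\rho_cL$, this gives $R=R^{\ast}(1+o(1))$ uniformly on $G\cap B$.

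The normalization then pins the constant for free: because $\sum_{\xi\in B}\nu_{\varphi_c}^{L-1}[\xi]R(\xi)=\mu_{N,L}[T\eta\in B]=1-o(1)$ and $\nu_{\varphi_c}^{L-1}[G]\to1$, the two-sided comparison $R=R^{\ast}(1+o(1))$ on $G\cap B$ forces $R^{\ast}\to1$ — equivalently it produces the one-big-jump local estimate $\bb P[S_L=N]\sim L\,\nu_{\varphi_c}[N-\rho_cL]$ as an output rather than an input. It remains to bound the off-window mass $\sum_{\xi\in G^{c}\cap B}\nu_{\varphi_c}^{L-1}[\xi]R(\xi)$. On the lower tail $|\xi|<\rho_c(L-1)-\delta_L$ one has $R\le R^{\ast}$, so this piece is dominated by $\nu_{\varphi_c}^{L-1}[G^{c}]\to0$, which is the concentration of $S_{L-1}$ at scale $o(L)$ (Chebyshev for $\alpha>3$, a Marcinkiewicz--Zygmund / stable estimate for $2<\alpha\le3$).

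The upper tail $|\xi|>\rho_c(L-1)+\delta_L$ within $B$ is the genuine obstacle, and it is exactly where $\alpha>2$ is forced. Here an excess of order up to $N-\rho_cL$ that still keeps $v>\max_i\xi_i$ must be produced by a single jump of size at most $(N-\rho_cL)/2$; integrating $\nu_{\varphi_c}[N-|\xi|]$ against the one-big-jump density $\approx L\,b^{-\alpha}$ of a jump of size $b\in(\delta_L,(N-\rho_cL)/2)$ yields a bound of order $L\,\delta_L^{1-\alpha}$, which tends to $0$ precisely when $\delta_L\gg L^{1/(\alpha-1)}$. Since the window part contributes $\sup_{G\cap B}|R-1|\to0$, combining the window and off-window estimates gives $\sum_{\xi\in B}\nu_{\varphi_c}^{L-1}[\xi]|R(\xi)-1|\to0$, and with the $B^{c}$ bound the theorem follows. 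The main difficulty is thus the heavy-tailed one-big-jump large-deviation control, together with the need to choose $\delta_L$ simultaneously $o(L)$ (for the slow variation of $\nu_{\varphi_c}$) and $\gg L^{1/(\alpha-1)}$ (for the upper-tail bound), which is possible if and only if $\alpha>2$.
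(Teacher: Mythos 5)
Your reduction to total variation, the exact computation of the pushforward density $R(\xi)=L\,\nu_{\varphi_c}[N-|\xi|]\,/\,\bb P[S_L=N]$ on $B=\{N-|\xi|>\max_i\xi_i\}$ (with $S_L=\eta_1+\cdots+\eta_L$ under $\nu_{\varphi_c}^L$), and the normalization trick are all sound, and one direction is indeed free: from $T_*\mu_{N,L}[B]\le 1$ and the window argument you get $R^{\ast}\le 1+o(1)$, i.e.\ $\bb P[S_L=N]\ge (1-o(1))\,L\,\nu_{\varphi_c}[N-\rho_cL]$. The genuine gap is the opposite direction, and it sits exactly at the two places you wave through. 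First, the $B^c$ step is circular: $T_*\mu_{N,L}[B^c]$ is precisely $\mu_{N,L}[\text{maximum not unique}]$, and you justify its vanishing by asserting that under $\mu_{N,L}$ the maximum is a unique condensate of size $\approx N-\rho_cL$ --- which is (a consequence of) the theorem being proved. Unwound, you need $\bb P[S_L=N,\ \text{max tied}]=o(L^{1-\alpha})$, and the dangerous case is a tie at a \emph{small} value, i.e.\ the event $\{S_L=N,\ M_L\le\epsilon L\}$: reaching sum $\rho L>\rho_cL$ with no macroscopic coordinate. Second, in the upper-tail off-window estimate, ``must be produced by a single jump of size at most $(N-\rho_cL)/2$'' is not a deterministic fact: on $B$ an excess $b>\delta_L$ can be spread over many moderate sites, in which case $v=N-|\xi|$ can be of order one and $\nu_{\varphi_c}[v]/\nu_{\varphi_c}[N-\rho_cL]$ is of order $L^{\alpha}$, so the pointwise comparison fails. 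Both gaps are one and the same missing ingredient: a quantitative ``no big jump'' bound (Fuk--Nagaev / truncated exponential moment) showing that $\bb P\bigl[\sum_x\eta_x\ge n,\ \max_x\eta_x\le y\bigr]$ is negligible against $L^{1-\alpha}$; without it the normalization trick yields only the lower bound on $\bb P[S_L=N]$, not $R^{\ast}\ge 1-o(1)$.

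For comparison, the paper does not prove Theorem \ref{thmIM} directly: it deduces it (see the remark after Theorem \ref{conden}) from Theorem \ref{conden}, whose proof shares your first step (translation invariance and factorization over the deleted maximal site, plus Chebyshev concentration of $\sum_{x\le L-1}\eta_x$ in a window $C_L$) but then proves the local limit theorem, Theorem \ref{lclt}, head-on by splitting $\{\sum_x\eta_x=N\}$ according to the number of sites above a threshold $B_L$. The estimate you are missing is exactly the technical core there: Lemma \ref{E0} (no big site, via the exponential Chebyshev with $s=1/B_L$ --- precisely the truncated-moment bound you need for ties at small values and for the spread-out excess), Lemma \ref{E2} (two big sites, your tied-at-large-value case), and the $A_1,A_3$ bounds inside Lemma \ref{E1}. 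Your architecture is a legitimate and arguably cleaner repackaging --- deriving the asymptotics $\bb P[S_L=N]\sim L\,\nu_{\varphi_c}[N-\rho_cL]$ as an output of normalization would shorten the $E_1$ analysis --- but to close it you must actually prove the no-big-jump estimate along the lines of Lemma \ref{E0}, rather than inferring uniqueness of the condensate from the conclusion.
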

This theorem indicates that for $\alpha>2$, when the density is supercritical a condensation phenomen  emerges. The site of maximum contains around $(\rho-\rho_c)L$ particles while about only $\rho_c$ particles stays at each of the rest sites.

We would like to investigate if the condensation phenomenon occurs for $\alpha\leq 2$, in the sense that, the site of maximum contains much more particles than any other site. Our result confirms that the condensation phenomenon still occurs for $\alpha\geq1$, as long as there are enough particles in the system.

We define the ``critical density" for all $\alpha\geq1$ like the role of $\rho_c$ in the case $\alpha>2$:
$$\rho_{c,N}\,:=\, E_{\nu_{\varphi_c, N}}[\eta_x]\,=\,\frac{\sum_{k=0}^N\frac{k}{a(k)}}{Z_N(\varphi_c)}.$$
Note that if $\alpha>2$, then $\rho_{c,N}\to\rho_c$ as $N\to\infty$. A straightforward computation gives the order of $\rho_{c,N}$:
\begin{equation}\label{rhocN}
\rho_{c,N}\,=\,
\begin{cases}
\Theta(1)  &\text{if} \quad \alpha>2\\
\Theta(\log N) &\text{if} \quad \alpha=2\\
\Theta(N^{2-\alpha}), &\text{if} \quad \alpha\in (1,2)\\
\Theta(\frac{N}{\log N}), &\text{if} \quad \alpha=1
\end{cases}
\end{equation}

Now we are ready to state our main result.
\begin{theorem}\label{conden}
Let $\mc F_L$ be the $\sigma$-algebra generated by $\eta_1,\cdots,\eta_L$. Assume $\alpha\geq 1$. Consider sequences of postive integers $\{\rho_L: L\geq 1\}$ and $\{k_L: L\geq 1\}$ such that 
$$\lim_{L\to\infty}\frac{k_L}{L\rho_L}\,=\,0$$
and let 
\begin{equation}\label{NL}
N\,=\,N(L)\,=\, L\rho_L+k_L.
\end{equation}
Assume that $\rho_L$ satisfies
\begin{itemize}
\item if $\alpha>2$, $\liminf\limits_{L\to\infty}\rho_L>\rho_c$,
\item if $\alpha=2$, $\rho_L\gg \rho_{c,N}$,
\item if $\alpha\in(1,2)$, $\rho_L\gg \rho_{c,N}\log N$
\item if $\alpha=1$, $\rho_L\gg \rho_{c,N}(\log\log N)^\delta$, for some $\delta>1$.
\end{itemize}
then
$$\lim_{L\to\infty}\sup_{A\in\mc F_{L-1}}\left|\mu_{N,L}[T^{-1}A]\,-\,\nu_{\varphi_c,N}^{L-1}[A]\right|\,=\,0.$$
\end{theorem}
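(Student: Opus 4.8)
The plan is to compute the pushforward measure $\bar\mu:=\mu_{N,L}\circ T^{-1}$ on $X_{L-1}$ pointwise and compare it directly with $\nu_{\varphi_c,N}^{L-1}$. For a fixed $\xi\in X_{L-1}$ I first count the preimages $T^{-1}\{\xi\}$: writing $m=N-\sum_i\xi_i$ for the mass that must sit on the removed site, there are exactly $L$ configurations $\eta\in E_{N,L}$, one for each position at which the value $m$ is inserted, whose maximal site carries the value $m$, provided $m>\max_i\xi_i$ (ties give a lower-order correction). Since the weight $\prod_x a(\eta_x)^{-1}$ depends only on the values, all $L$ preimages have the same mass, so
\begin{equation*}
\bar\mu(\xi)\,=\,\frac{L}{Z_{N,L}}\,\frac{1}{a(m)}\prod_{i=1}^{L-1}\frac{1}{a(\xi_i)},\qquad m=N-\sum_i\xi_i ,
\end{equation*}
on the set where $m>\max_i\xi_i$. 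Dividing by $\nu_{\varphi_c,N}^{L-1}(\xi)=Z_N(\varphi_c)^{-(L-1)}\prod_i a(\xi_i)^{-1}$ and using $a(k)=k^\alpha$ gives the pointwise ratio $\bar\mu(\xi)/\nu_{\varphi_c,N}^{L-1}(\xi)=R\,(N-\sum_i\xi_i)^{-\alpha}$ with $R:=L\,Z_N(\varphi_c)^{L-1}/Z_{N,L}$. Hence the total variation reduces, up to negligible atypical sets, to an expectation under the product measure,
\begin{equation*}
\sup_{A\in\mc F_{L-1}}\big|\bar\mu(A)-\nu_{\varphi_c,N}^{L-1}(A)\big|\,\leq\,\tfrac12\,E_{\nu_{\varphi_c,N}^{L-1}}\Big[\big|R\,(N-S_{L-1})^{-\alpha}-1\big|\Big]+(\text{error}),
\end{equation*}
where $S_{L-1}=\sum_{i=1}^{L-1}\eta_i$, and the whole problem becomes to show this vanishes for the right $R$.

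Next I fix the normalization and the typical scale. Since every $\eta\in E_{N,L}$ has coordinates bounded by $N$, the truncation is automatic and the partition function factorizes exactly as $Z_{N,L}=Z_N(\varphi_c)^{L}\,\nu_{\varphi_c,N}^{L}[S_L=N]$ (recall $\varphi_c=1$), so $R=L\big(Z_N(\varphi_c)\,\nu_{\varphi_c,N}^{L}[S_L=N]\big)^{-1}$ and determining $R$ is equivalent to a sharp estimate of the probability that $L$ i.i.d.\ truncated critical variables sum to $N$. Because $\nu_{\varphi_c,N}$ has the subexponential tail $\nu_{\varphi_c,N}[k]\asymp k^{-\alpha}$, the atypically large target $N$ is reached through the single-big-jump mechanism: one coordinate takes a value near $m^\star:=N-(L-1)\rho_{c,N}$ while the other $L-1$ coordinates sum to their mean $(L-1)\rho_{c,N}$. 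Summing the single-jump contribution over the location of the big site and over admissible values of it (their local-limit masses summing to one) gives $\nu_{\varphi_c,N}^{L}[S_L=N]\sim L\,\big(Z_N(\varphi_c)\,a(m^\star)\big)^{-1}$, i.e.\ $R\sim a(m^\star)$ with $m^\star=\Theta(L\rho_L)$ under the stated hypotheses. With this value of $R$, writing $\Delta:=S_{L-1}-(L-1)\rho_{c,N}$, the integrand becomes $R\,(N-S_{L-1})^{-\alpha}=\big(m^\star/(m^\star-\Delta)\big)^{\alpha}=1+O(\Delta/m^\star)$ on the event $\{|\Delta|\le\epsilon\,m^\star\}$.

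For the bulk contribution it then suffices to show $E_{\nu_{\varphi_c,N}^{L-1}}|\Delta|\ll m^\star$ together with $\nu_{\varphi_c,N}^{L-1}[|\Delta|>\epsilon m^\star]\to0$. The spread of $\Delta$ is governed by the scale of $\nu_{\varphi_c,N}$: its variance is $\Theta(1)$ for $\alpha>2$ and $\Theta(N)$ for $\alpha=2$, while for $\alpha\in[1,2)$ the partial sums lie in the domain of attraction of a stable law and the relevant scale is the truncated heavy-tailed one. Matching each scale against $m^\star=\Theta(L\rho_L)$ is exactly what the four hypotheses on $\rho_L$ encode; for instance at $\alpha=2$ one needs $\sqrt{LN}\ll L\rho_L$, which reduces to $\rho_L\to\infty$ and is implied by $\rho_L\gg\log N$, whereas the heavier cases demand the extra logarithmic room stated. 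I would make these estimates quantitative with Chebyshev in the finite-variance regime and with truncated stable local-limit estimates otherwise.

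The genuinely hard step, and the one that pins down the precise conditions on $\rho_L$, is the control of the atypical event $\{|\Delta|\gtrsim m^\star\}$, which includes the region $\{N-\sum_i\xi_i\le\max_i\xi_i\}$ excluded from the clean formula of the first paragraph. By the single-big-jump picture the dominant way for the bulk sum to be anomalously large is the emergence of a second macroscopic site, a second incipient condensate, whose joint weight $a(m_1)^{-1}a(m_2)^{-1}$ must be shown to be negligible against the single-condensate weight $a(m_1+m_2)^{-1}$ after summing over positions and over all splittings of the excess mass. Turning this heuristic into a bound requires a \emph{uniform} local limit theorem for the sum of $L-1$ i.i.d.\ truncated variables with law $\nu_{\varphi_c,N}$, valid over the whole range of deviations and uniformly in $N$; this is the main analytic input and is hardest for $\alpha\in[1,2)$, where the limit is stable rather than Gaussian and the truncation at level $N$ interacts delicately with the polynomial tail. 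I expect the proof to isolate this as a standalone local-limit lemma, after which the elementary preimage count of the first paragraph, the partition-function identity, and the concentration estimates assemble directly into the claimed vanishing of the total variation.
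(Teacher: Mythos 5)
Your reduction is essentially the paper's own argument, repackaged in density form. The exact factorization $Z_{N,L}=Z_N(\varphi_c)^L\,\nu^L_{\varphi_c,N}\left[\sum_x\eta_x=N\right]$, the preimage count producing the factor $L$ (the paper gets it from translation invariance in \eqref{eq1}), and Chebyshev concentration of $S_{L-1}$ around $(L-1)\rho_{c,N}$ all appear in the paper's proof of Theorem \ref{conden}; both routes funnel the entire problem into the single-point local limit theorem $\nu^L_{\varphi_{c,N}}\left[S_L=N\right]\sim L\,\nu_{\varphi_{c,N}}\left[N-(L-1)\rho_{c,N}\right]$, which is the paper's Theorem \ref{lclt}. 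One remark in your favor: given your exact formula $\bar\mu(\xi)/\nu^{L-1}_{\varphi_c,N}(\xi)=R\,m^{-\alpha}$, the \emph{uniform-in-deviations} local limit theorem you call for is more than is needed; the point estimate at $N$ fixes $R\sim a(m^\star)$, the bulk term is handled by Chebyshev, and the $\bar\mu$-mass of the atypical set is then automatic since $\bar\mu(\text{good set})=E_{\nu}\left[R\,m^{-\alpha}\mathbf 1_{\text{good}}\right]\to 1$. This is also how the paper disposes of $T(B_{N,L})^c$, by taking $A=X_L$ at the end.

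The genuine gap is that Theorem \ref{lclt} is left unproved, and your sketch of it mislocates the difficulty. In the paper, the two-condensate event you single out as hardest is in fact the easy part: Lemma \ref{E2} is a short union bound of order $L^2\left(\nu_{\varphi_{c,N}}\left[\eta_x=B_L\right]\right)^2$. The step that consumes all four hypotheses on $\rho_L$ is the \emph{no}-big-site event $E_0$ (Lemma \ref{E0}): ruling out that the sum reaches $N$ through a collective deviation of sites all below $B_L$. Plain Chebyshev at this point would only require $\rho_L\gg\rho_{c,N}$, which is insufficient; the paper instead uses an exponential Chebyshev bound at tilt $s=1/B_L$, with $B_L$ tuned regime by regime ($N/B_L=(\log N)^2$ for $\alpha>2$, $\rho_L^{1/4}(\log N)^{3/4}$ for $\alpha=2$, $3\log N$ for $\alpha\in(1,2)$, $\log N(\log\log N)^{\delta/2}$ for $\alpha=1$), plus sharpened truncated second-moment bounds such as $\sum_{k\leq B_L}\sum_{l\geq k}R(l)=O(B_L^{3-\alpha})$; this is exactly where $\rho_L\gg\rho_{c,N}\log N$, respectively $\rho_{c,N}(\log\log N)^\delta$, is needed. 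Relatedly, your appeal to ``truncated stable local-limit estimates'' for $\alpha\in[1,2)$ is a detour the paper never takes: the truncation at level $N$ makes all exponential moments finite, so Chernoff bounds suffice throughout, and the matching $E_1$ asymptotics follow from monotonicity of $k\mapsto\nu_{\varphi_{c,N}}[k]$ together with a regime-dependent choice of the window $C_L$. Until this Chernoff analysis (or an equivalent) is supplied, your proposal establishes the correct reduction but not the theorem.
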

\begin{remark}
One can easily check that  if $\alpha>2$, 
$$\lim_{\substack{N,L\to\infty\\N/L\to\rho}}\sup_{A\in\mc F_{L-1}}\left|\nu_{\varphi_c,N}^{L-1}[A]\,-\,\nu_{\varphi_c}^{L-1}[A]\right|\,=\,0.$$
By the triangle inequality, Theorem \ref{thmIM} can deduced from this Theorem.
\end{remark}

\begin{remark}
For every $\alpha\geq 1$ and sufficiently large $N$, the site of maximum contains $\Theta(N)$ particles while about only $\rho_{c,N}$ particles stay at each of the rest sites. Those assumptions on $\rho_L$ imply that $\rho_{c,N}\ll N$ for all $\alpha\geq1$.  Therefore the condensation phenomenon occurs for every $\alpha\geq1$.
\end{remark}

\begin{remark}
The assumptions on $\rho_L$ in the Theorem to observe the condensation phenomenon of the invariant measure  are not optimal. Actually even for $\alpha>2$, Armend\'ariz, Grosskinsky and Loulakis\cite{agl13} showed that if $\sqrt{L}\ll N-\rho_c L\ll L$, the invariant measures of the zero range processes still exhibit a condensation phenomenon.
\end{remark}

\begin{section}{Proof of Theorem \ref{conden}}
The proof of Theorem \ref{conden} is similar to the proof of Theorem \ref{thmIM}. One of the main steps is to prove a local limit theorem for all $\alpha\geq 1$. Our proof of it is inspired by ideas of the paper \cite{d89}. Since $N$ can be thought as a function of $L$ sastisfying equation \eqref{NL}, when we consider the limit as one of them tends to infinity, it means the limit when both of them tend to infinity. All the constants $C$ in this section would not depend on $N$ or $L$, and may change from line to line. 
\begin{proof}[Proof of Theorem \ref{conden}]
Given an event $A\in \mc F_{L-1}$, define the event in $\mc F_L$:
$$(A,M_L)\,=\,\{\eta\in X_L: (\eta_1,\cdots,\eta_{L-1})\in A, \,M_L(\eta)=L\}.$$
Since $\mu_{N,L}$ is translation invariant with respect to the operator $\sigma^{x,y}$ for each pair $x,y\in\bb T_L$, we have 
\begin{equation}\label{eq1}
\begin{split}
\mu_{N,L}[T^{-1}A]\,&=\,\sum_{x=1}^L\mu_{N,L}\left[T^{-1}A\cap\{M_L=x\}\right]\\
&=\,L\mu_{N,L}[(A,M_L)]\\
&=\, L\nu_{\varphi_c, N}^L\left[(A, M_L)\Big\vert\sum_{x\in\bb T_L}\eta_x=N\right].\\
\end{split}
\end{equation}
The last indentity follows from the explicit expression of the invariant measure $\mu_{N,L}$.

Consider a sequence $a_L$ such that 
\begin{equation}
a_L\,=\,
\begin{cases}
\sqrt{L}, & \text{if} \quad \alpha>3\\
\sqrt{L\log N}, & \text {if} \quad \alpha=3\\
\sqrt{LN^{3-\alpha}}, & \text{if} \quad \alpha\in(1,3)\\
\frac{\sqrt{LN^2}}{\sqrt{\log N}}, &\text{if} \quad \alpha=1
\end{cases}
\end{equation}
and a sequence $C_L$ such that $a_L\ll C_L\ll N$. Let $t_L= N-(L-1)\rho_{c,N}-C_L$ and $t_L^+= N-(L-1)\rho_{c,N}+C_L$. From the assumption on $\rho_L$ and the order of $\rho_{c,N}$ given in \eqref{rhocN}, $t_L$ and $t_L^+$ are of order $\Theta(N)$.

Define the event 
$$B_{N,L}\,=\,\left\{\eta\in X_L:\, \left|N-(L-1)\rho_{c,N}-M_L(\eta)\right|\leq C_L, \max_{1\leq x\leq L-1}\eta_x\leq t_L\right\}.$$ 
Since $\nu_{\varphi_{c,N}}^L$ is a product measure, $\nu_{\varphi_{c,N}}^L\left[(A, M_L)\cap B_{N,L}\cap\left\{ \sum_{x\in \bb T_L}\eta_x=N\right\}\right]$ is equal to
\begin{equation}\label{eqA}
\sum_{m=t_L}^{t_L^+}\nu_{\varphi_{c,N}}[m]\nu_{\varphi_{c,N}}^{L-1}\left[A\cap\left\{\max_{1\leq x\leq L-1}\eta_x\leq t_L\right\}\cap\left\{\sum_{x=1}^{L-1}\eta_x=N-m\right\}\right].
\end{equation}

For all integers $m$ such that $t_L\leq m\leq t_L^+$,  since $C_L\ll N$,
\begin{equation}\label{eqasmp}
\lim_{L\to\infty}\frac{\nu_{\varphi_{c,N}}[m]}{\nu_{\varphi_{c,N}}[N-(L-1)\rho_{c,N}]}\,=\,1.
\end{equation}

We claim that 
\begin{equation}\label{claim1}
\lim_{L\to\infty}\nu_{\varphi_{c,N}}^{L-1}\left[\max_{1\leq x\leq L-1}\eta_x\leq t_L\right]\,=\,1.
\end{equation}
Indeed, the probability at the left hand side is equal to 
$$\prod_{x=1}^{L-1}\nu_{\varphi_{c,N}}[\eta_x\leq t_L]\,=\,\left(1\,-\,\frac{\sum_{k=t_L+1}^N\frac{1}{a(k)}}{Z_N(\varphi_c)}\right)^L.$$
Therefore to prove the claim, it is sufficient to show that 
\begin{equation}\label{eqexp}
\lim_{L\to\infty} \frac{L\sum_{k=t_L+1}^N\frac{1}{a(k)}}{Z_N(\varphi_c)}\,=\,0.
\end{equation}
A simple computation shows that
\begin{equation*}
Z_N(\varphi_c)\,=\,
\begin{cases}
\Theta(1), &\text{if} \quad \alpha\in (1,\infty)\\
\Theta(\log N), &\text{if} \quad \alpha=1
\end{cases}
\end{equation*}
and
\begin{equation*}
\sum_{k=t_L+1}^N\frac{1}{a(k)}\,=\,
\begin{cases}
O\left(t_L^{-(\alpha-1)}-N^{-(\alpha-1)}\right), &\text{if} \quad \alpha\in (1,\infty)\\
O\left(\log N-\log t_L\right), &\text{if} \quad \alpha=1
\end{cases}
\end{equation*}
Equation \eqref{eqexp} follows from the these two estimates, the assumption on $\rho_L$ and 
$C_L\ll N$.

Moreover, we claim that 
\begin{equation}\label{claim2}
\lim_{L\to\infty}\nu^{L-1}_{\varphi_c, N}\left[\left|\sum_{x=1}^{L-1}\eta_x-(L-1)\rho_{c,N}\right|<C_L\right]\,=\,1.
\end{equation}
By Chebyshev's inequality, 
$$\nu^{L-1}_{\varphi_c, N}\left[\left|\sum_{x=1}^{L-1}\eta_x-(L-1)\rho_{c,N}\right|\geq C_L\right]\,\leq\,\frac{1}{C_L^2}\sum_{x=1}^{L-1}\left(\nu_{\varphi_c, N}[\eta_x^2]-\rho_{c,N}^2\right).$$
The order of $\nu_{\varphi_c, N}[\eta_x^2]$ can be computed easily:
\begin{equation}\label{den2}
\nu_{\varphi_c, N}[\eta_x^2]\,=\,
\begin{cases}
\Theta(1) & \text{if} \quad \alpha>3\\
\Theta(\log N) & \text{if} \quad \alpha=3\\
\Theta(N^{3-\alpha}) &\text{if} \quad \alpha\in (1,3)\\
\Theta(\frac{N^2}{\log N}) &\text{if} \quad \alpha=1
\end{cases}
\end{equation}
The claim follows from this estimate and that $C_L\gg a_L$.

By \eqref{eqasmp}\eqref{claim1} and \eqref{claim2}, we conclude that for any event $A\in\mc F_{L-1}$, 
$$\nu_{\varphi_{c,N}}^L\left[(A, M_L)\cap B_{N,L}\cap\left\{ \sum_{x\in \bb T_L}\eta_x=N\right\}\right]$$
is equal to 
$$\nu_{\varphi_{c,N}}\left[N-(L-1)\rho_{c,N}\right]\left(\nu^{L-1}_{\varphi_{c,N}}[A]+o(1)\right),$$
where the error is uniformly small in $A$. 
Replacing $A$ by $A\cap T(B_{N,L})$ in equation \eqref{eq1}, we obtain
\begin{equation*}
\mu_{N,L}[T^{-1}\left(A\cap T(B_{N,L})\right)]\,=\, L\nu_{\varphi_c, N}^L\left[(A, M_L)\cap B_{N,L}\vert\sum_{x\in\bb T_L}\eta_x=N\right].
\end{equation*}
Combing these two identities and Theorem \ref{lclt}, we get
\begin{equation}
\lim_{L\to\infty}\sup_{A\in\mc F_{L-1}}\left|\mu_{N,L}[T^{-1}(A\cap T(B_{N,L}))]\,-\,\nu_{\varphi_c,N}^{L-1}[A]\right|\,=\,0
\end{equation}
Choosing $A=X_L$ in particular, this equation implies 
$$\lim_{L\to\infty}\mu_{N,L}[T^{-1}( T(B_{N,L})^c)]=0,$$
where $T(B_{N,L})^c$ is the complement set of $T(B_{N,L})$ in $X_{L-1}$.
The assertion of the Theorem follows by combining the last two equations.
\end{proof}

\begin{theorem}[The Local Limit Theorem]\label{lclt}
For every $\alpha\geq 1$,
$$\lim_{L\to\infty}\frac{\nu^L_{\varphi_{c,N}}\left[\sum_{x=1}^L\eta_x=N\right]}{L\nu_{\varphi_{c,N}}\left[N-(L-1)\rho_{c,N}\right]}=1.$$
\end{theorem}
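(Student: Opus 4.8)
The plan is to read the statement as a single big jump local limit theorem for the i.i.d.\ sum $S_L:=\sum_{x=1}^L\eta_x$, where under $\nu^L_{\varphi_{c,N}}$ the coordinates $\eta_x$ are independent with common law $\nu_{\varphi_{c,N}}$ and mean $\rho_{c,N}$. Writing $b_L:=N-(L-1)\rho_{c,N}$ for the value in the denominator, the goal is the asymptotics $\nu^L_{\varphi_{c,N}}[S_L=N]=L\,\nu_{\varphi_{c,N}}[b_L](1+o(1))$. Two structural facts drive everything. First, $\nu_{\varphi_{c,N}}[k]=k^{-\alpha}/Z_N(\varphi_c)$ is a pure power law on $\{1,\dots,N\}$, hence slowly varying on the scale $C_L$ in the sense that $\nu_{\varphi_{c,N}}[b_L\pm C_L]=\nu_{\varphi_{c,N}}[b_L](1+o(1))$, because $C_L\ll b_L=\Theta(N)$. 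Second, by the already established \eqref{claim1} and \eqref{claim2}, under $\nu^{L-1}_{\varphi_{c,N}}$ one has $\max_{x<L}\eta_x\le t_L$ and $\big|\sum_{x<L}\eta_x-(L-1)\rho_{c,N}\big|<C_L$ with probability tending to $1$. The factor $L$ is produced combinatorially: decomposing $\{S_L=N\}$ by the site $m_L$ carrying the maximum and using permutation symmetry together with the uniform tie break gives exactly $\nu^L_{\varphi_{c,N}}[S_L=N]=L\,\nu^L_{\varphi_{c,N}}[m_L=L,\,S_L=N]$, so it suffices to prove $\nu^L_{\varphi_{c,N}}[m_L=L,\,S_L=N]=\nu_{\varphi_{c,N}}[b_L](1+o(1))$.

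For the lower bound I would retain only configurations with a single large coordinate: if $\eta_L>t_L$ and $\max_{x<L}\eta_x\le t_L$, then $m_L=L$. Conditioning on $\eta_L=j$ and restricting to $j\in(t_L,t_L^+]$, equivalently $\sum_{x<L}\eta_x=N-j$ in the bulk window, gives
\begin{equation*}
\nu^L_{\varphi_{c,N}}[m_L=L,\,S_L=N]\ \ge\ \nu_{\varphi_{c,N}}[t_L^+]\sum_{j=t_L+1}^{t_L^+}\nu^{L-1}_{\varphi_{c,N}}\Big[\max_{x<L}\eta_x\le t_L,\ \sum_{x<L}\eta_x=N-j\Big].
\end{equation*}
By slow variation $\nu_{\varphi_{c,N}}[t_L^+]=\nu_{\varphi_{c,N}}[b_L](1-o(1))$, while the sum equals the probability that $\sum_{x<L}\eta_x$ lies in the bulk window with every coordinate below $t_L$, which tends to $1$ by \eqref{claim1} and \eqref{claim2}. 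This yields the matching lower bound $\nu_{\varphi_{c,N}}[b_L](1-o(1))$.

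For the upper bound I would condition on the value $j:=\eta_L=M_L$ of the maximal coordinate and split the range of $j$. On the main part $j\in[t_L,t_L^+]$ one has $\nu_{\varphi_{c,N}}[j]\le\nu_{\varphi_{c,N}}[t_L]=\nu_{\varphi_{c,N}}[b_L](1+o(1))$ and $\sum_j\nu^{L-1}_{\varphi_{c,N}}[\sum_{x<L}\eta_x=N-j]\le1$, so this part contributes at most $\nu_{\varphi_{c,N}}[b_L](1+o(1))$. On the part $j>t_L^+$ the complementary sum lies below $(L-1)\rho_{c,N}-C_L$, and since $\nu_{\varphi_{c,N}}[j]\le\nu_{\varphi_{c,N}}[b_L]$ there, it is bounded by $\nu_{\varphi_{c,N}}[b_L]\,\nu^{L-1}_{\varphi_{c,N}}\big[\sum_{x<L}\eta_x<(L-1)\rho_{c,N}-C_L\big]=o(\nu_{\varphi_{c,N}}[b_L])$ by \eqref{claim2}.

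The remaining part $N/L\le j<t_L$, where the maximal coordinate sits well below $b_L$, is the main obstacle; summed over $j$ it equals $\nu^L_{\varphi_{c,N}}[S_L=N,\,M_L<t_L]$, and here the non-maximal coordinates must over-shoot their mean by more than $C_L\gg a_L$. I would split this event by the second largest coordinate $\eta_{(2)}$ relative to a threshold $\theta'$ with $a_L\ll\theta'\ll b_L$. On $\{\eta_{(2)}\le\theta'\}$ all coordinates except the maximum are bounded by $\theta'$ yet their sum over-shoots its mean by more than $C_L$, a deviation far beyond the fluctuation scale $a_L$ of the capped sum, hence negligible by concentration. On $\{\eta_{(2)}>\theta'\}$ there are two coordinates exceeding $\theta'$, and I bound the contribution by the two big jump estimate
\begin{equation*}
\binom{L}{2}\sum_{s}\sum_{\substack{j_1+j_2=N-s\\ j_1,j_2>\theta'}}\nu_{\varphi_{c,N}}[j_1]\,\nu_{\varphi_{c,N}}[j_2]\,\nu^{L-2}_{\varphi_{c,N}}\Big[\sum_{x=1}^{L-2}\eta_x=s\Big],
\end{equation*}
with $s$ ranging over bulk values. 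Since $\nu_{\varphi_{c,N}}[k]=k^{-\alpha}/Z_N(\varphi_c)$ and $j_1+j_2=N-s\approx b_L$ forces the larger jump to be $\Theta(b_L)$, this collapses to a constant multiple of $L^2\,\nu_{\varphi_{c,N}}[b_L]\,\nu_{\varphi_{c,N}}[\eta_x>\theta']$, which is $o(L\,\nu_{\varphi_{c,N}}[b_L])$ precisely when $L\,\nu_{\varphi_{c,N}}[\eta_x>\theta']\to0$. For $\alpha\in(1,\infty)$ this amounts to $\theta'\gg L^{1/(\alpha-1)}$, compatible with $a_L\ll\theta'\ll b_L$ exactly because the hypotheses on $\rho_L$ force $L\ll N^{\alpha-1}$. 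The critical exponents $\alpha\in\{1,2,3\}$, and especially $\alpha=1$, are where I expect the genuine difficulty: there logarithmic corrections enter $Z_N(\varphi_c)$, the variance, and the tail, the crude two jump bound becomes too lossy, and one must track constants sharply and likely iterate the big jump decomposition on the precise fluctuation scale $a_L$ — this is what pins down the case-dependent forms of $a_L$ and forces the assumption $\rho_L\gg\rho_{c,N}(\log\log N)^\delta$ with $\delta>1$ when $\alpha=1$. Collecting the three parts gives $\nu^L_{\varphi_{c,N}}[m_L=L,\,S_L=N]=\nu_{\varphi_{c,N}}[b_L](1+o(1))$, and multiplying by $L$ proves the theorem.
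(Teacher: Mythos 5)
Your overall architecture is the same as the paper's: a single-big-jump trichotomy (no large site, exactly one, at least two), which is precisely the paper's decomposition into $E_0$, $E_1$, $E_2$ and Lemmas \ref{E0}--\ref{E2}; your main window $j\in[t_L,t_L^+]$ and the regime $j>t_L^+$ reproduce the $A_2$ and $A_1$ estimates inside Lemma \ref{E1} (via \eqref{claim1}, \eqref{claim2} and monotonicity of $\nu_{\varphi_{c,N}}[\cdot]$), and your two-jump bound, together with the observation that the hypotheses on $\rho_L$ force $L\ll N^{\alpha-1}$ up to logarithms, is exactly the content of Lemma \ref{E2}. The difference is that where the paper works hardest, your proposal has a genuine gap: the step ``a deviation far beyond the fluctuation scale $a_L$ of the capped sum, hence negligible by concentration'' is asserted, not proved, and fluctuation-scale reasoning is quantitatively far from sufficient. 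Chebyshev at scale $C_L\gg a_L$ gives only $(a_L/C_L)^2=o(1)$, whereas you need the capped-sum overshoot probability to be $o(\nu_{\varphi_{c,N}}[N-(L-1)\rho_{c,N}])$, i.e.\ $o(N^{-\alpha})$ (or $o(1/(N\log N))$ at $\alpha=1$); for $\alpha\in(1,2]$ the constraint $C_L\ll N$ forces $(a_L/C_L)^2\ge LN^{1-\alpha}$, which under the stated hypothesis $L\ll N^{\alpha-1}/\log N$ cannot even be pushed below $1/\log N$. The paper closes this by an exponential Chebyshev (Chernoff) bound on the truncated variables with tilt $s=1/B_L$ and case-specific calibrations of $B_L$ and $C_L$ (Lemma \ref{E0}, and the $A_3$ estimate in Lemma \ref{E1}); this is exactly where the assumptions $\rho_L\gg\rho_{c,N}\log N$, $\rho_L\gg\rho_{c,N}(\log\log N)^\delta$, etc., are consumed.

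Two further concrete problems. First, in your third regime you bound $\sum_j\nu_{\varphi_{c,N}}[j]\,P_j$ by summing $\nu_{\varphi_{c,N}}[j]$ to one; this discards the decay of the law of the moderately large site, and without it even the Chernoff bound with the natural tilt is too weak: with the paper's choices for $\alpha\in(1,2)$ ($N/B_L=3\log N$, $C_L=N(\log N)^{-1/2}$) the exponential factor is only $O(e^{-\sqrt{\log N}})$, nowhere near $o(N^{-\alpha})$. The paper instead keeps the prefactor $\nu_{\varphi_{c,N}}[B_L]=\Theta((\log N/N)^{\alpha})$ in front of the Chernoff factor in $A_3$, so the exponential term only has to kill polylogarithmic slack --- you would need the analogous factor $\nu_{\varphi_{c,N}}[\theta']$, which also entails treating separately the sub-regime where the maximum itself is below $\theta'$ (the paper's $E_0$, where the overshoot is $\Theta(N)$ and the tilt $1/B_L$ turns it into the decisive exponent $N/B_L$). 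Second, you misplace the critical-exponent difficulty: the two-jump term is the easy part (Lemma \ref{E2} is a short direct computation giving $O((\log N)^{4\alpha}/N^{\alpha-1})$ for $\alpha>2$ and $O((\log N)^{2\alpha-1}/N)$ for $\alpha\in[1,2]$), no iteration of the big-jump decomposition is performed or needed, and all the logarithmic corrections at $\alpha\in\{1,2\}$ (including $\delta>1$ at $\alpha=1$) are absorbed by the truncated exponential-moment estimates for the no-extra-big-jump events.
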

\begin{proof}
Define $B_L$ such that 
\begin{equation}
\frac{N}{B_L}\,=\,
\begin{cases}
(\log N)^2, & \text{if} \quad \alpha>2\\
\rho_L^{\frac{1}{4}}(\log N)^{\frac{3}{4}}, & \text {if} \quad \alpha=2\\
3\log N, & \text{if} \quad \alpha\in(1,2)\\
\log N(\log\log N)^{\delta/2}, &\text{if} \quad \alpha=1
\end{cases}
\end{equation}
where in the last line, $\delta$ is the one from the assumption on $\rho_L$ in Theorem \ref{conden}.
Denote by $\xi$ the number of sites which contain more than $B_L$ particles,
$$\xi\,:=\, \left|\left\{\ x\in \bb T_L: \eta_x\geq B_L\right\}\right|,$$
where $|\cdot|$ represents the cardinality of the set.
We divide the event $\left\{\sum_{x=1}^L\eta_x=N\right\}$ into three sets defined as follows:
$$E_i\,:=\, \left\{\sum_{x=1}^L\eta_x=N,\, \xi= i \right\}, \quad i=0,1$$
$$E_2\,:=\, \left\{\sum_{x=1}^L\eta_x=N,\, \xi\geq 2 \right\}.$$
Then 
$$\nu^L_{\varphi_{c,N}}\left[\sum_{x=1}^L\eta_x=N\right]\,=\,\sum_{i=0}^2\nu^L_{\varphi_{c,N}}\left[E_i\right].$$
The theorem follows easily from the next three lemmas.
\end{proof}

In the following three lemmas we will compare each $\varphi_{c,N}^L\left[E_i\right]$ with respect to 
$$L\nu_{\varphi_{c,N}}\left[N-(L-1)\rho_{c,N}\right].$$ Firstly we compute the order of the latter.
A direct computation shows that there exists a constant $C>0$ such that for sufficiently large $N$, if $\alpha>1$,
$$\nu_{\varphi_{c,N}}\left[N-(L-1)\rho_{c,N}\right]\,\geq\, \frac{C}{N^\alpha}$$
and if $\alpha=1$,
$$\nu_{\varphi_{c,N}}\left[N-(L-1)\rho_{c,N}\right]\,\geq\, \frac{C}{N\log N}.$$

\begin{lemma}\label{E0}
$$\lim_{L\to\infty}\frac{\nu^L_{\varphi_{c,N}}\left[E_0\right]}{L\nu_{\varphi_{c,N}}\left[N-(L-1)\rho_{c,N}\right]}\,=\,0$$
\end{lemma}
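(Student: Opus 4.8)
The plan is to bound $\nu^L_{\varphi_{c,N}}[E_0]$ by a large deviation (exponential Chebyshev) estimate and to show it is super-polynomially smaller than the denominator $L\nu_{\varphi_{c,N}}[N-(L-1)\rho_{c,N}]$, whose lower bound of order $N^{-\alpha}$ (resp. $(N\log N)^{-1}$ for $\alpha=1$) was recorded just before the lemma. The heuristic is that on $E_0$ every site carries fewer than $B_L$ particles, so reaching the total $N$ forces the empirical mean $N/L$, which is of order $\rho_L$, to vastly exceed the typical per-site occupation $\rho_{c,N}$. Since the hypotheses give $\rho_{c,N}\ll\rho_L$, this is a collective upward deviation and is exponentially unlikely.

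Concretely, I would first fix $\theta\ge 0$ and use $\mathbf{1}_{E_0}\le e^{\theta(\sum_x\eta_x-N)}\prod_x\mathbf{1}\{\eta_x\le B_L\}$ together with the product structure of $\nu^L_{\varphi_{c,N}}$ to obtain
\[
\nu^L_{\varphi_{c,N}}[E_0]\ \le\ e^{-\theta N}\Big(\tfrac{1}{Z_N(\varphi_c)}\sum_{k=0}^{B_L}\tfrac{e^{\theta k}}{a(k)}\Big)^{L}.
\]
Writing $\phi(\theta)=\sum_{k=0}^{B_L}e^{\theta k}/a(k)$, and noting $\phi(0)\le Z_N(\varphi_c)$ while $\frac{d}{ds}\log\phi(s)$ equals the mean $\tilde m(s)$ of the tilted truncated law, which is increasing in $s$, taking logarithms yields
\[
\log\nu^L_{\varphi_{c,N}}[E_0]\ \le\ -\theta N+L\int_0^\theta\tilde m(s)\,ds\ \le\ -\theta\big(N-L\,\tilde m(\theta)\big).
\]

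The key estimate is a uniform bound on the tilted mean. Choosing the tilt on the scale of $B_L$, i.e.\ $\theta=\lambda/B_L$, and using $e^{\theta k}\le e^{\lambda}$ for $k\le B_L$ in the numerator while bounding the denominator from below by $e^{\theta k}\ge 1$, I get
\[
\tilde m(\theta)\ \le\ e^{\lambda}\,\frac{\sum_{k=0}^{B_L}k/a(k)}{\sum_{k=0}^{B_L}1/a(k)}\ =\ e^{\lambda}\,E_{\nu_{\varphi_{c,N}}}[\eta_x\mid\eta_x\le B_L]\ \le\ e^{\lambda}\rho_{c,N},
\]
the last step because truncating the upper tail only lowers the mean. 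This single inequality, valid for all $\alpha\ge 1$, is the crux. Combined with $N=L\rho_L+k_L$ and the hypotheses (which give $\rho_{c,N}\ll\rho_L$ when $\alpha\le 2$, and $L\rho_{c,N}\le N/(1+\epsilon)$ when $\alpha>2$, using $\rho_{c,N}\to\rho_c$ and $\liminf\rho_L>\rho_c$), taking $\lambda$ a small fixed constant for $\alpha>2$ and $\lambda=1$ otherwise gives $N-L\tilde m(\theta)=\Theta(N)$, hence $\nu^L_{\varphi_{c,N}}[E_0]\le\exp(-c\,\lambda\,N/B_L)$ for some $c>0$.

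Finally I would compare with the denominator: using the recorded lower bounds and $L\ge 1$, the ratio is at most a power of $N$ (times $\log N$ when $\alpha=1$) multiplied by $\exp(-c\lambda N/B_L)$, so it suffices to insert the definition of $B_L$ in each regime, where $N/B_L$ equals $(\log N)^2$, $\rho_L^{1/4}(\log N)^{3/4}$, $3\log N$, $\log N(\log\log N)^{\delta/2}$ respectively, and check that $c\lambda N/B_L$ outgrows the logarithm of that power. This is immediate for $\alpha\ge 2$ (there $N/B_L\gg\log N$) and for $\alpha=1$ (there $(\log\log N)^{\delta/2}\to\infty$ supplies the margin). I expect the main obstacle to be the tightest case $\alpha\in(1,2)$, where one needs $3c\lambda>\alpha$, which is exactly why $B_L$ carries the factor $3$: the tilt $\theta=\lambda/B_L$ must be large enough that $\lambda N/B_L$ beats $\alpha\log N$, yet the induced tilted mean $e^{\lambda}\rho_{c,N}$ must stay well below $\rho_L$, and both constraints are met only through the specific definition of $B_L$ and the assumed lower bounds on $\rho_L$.
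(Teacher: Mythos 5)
Your proof is correct, and while it shares the paper's overall skeleton (an exponential Chebyshev bound with tilt $\theta$ on the scale $1/B_L$, compared in each regime against the recorded lower bounds $\nu_{\varphi_{c,N}}[N-(L-1)\rho_{c,N}]\geq CN^{-\alpha}$, resp.\ $C(N\log N)^{-1}$), the key estimate is genuinely different from the paper's. The paper expands the per-site truncated moment generating function $E_{\nu_{\varphi_{c,N}}}[e^{s\eta_x\mathbf{1}\{\eta_x<B_L\}}]$ to second order in $s$ via two summations by parts, which forces it to control the iterated tail sums $\sum_{k=0}^{B_L}\sum_{l=k}^{B_L}R(l)$ and to derive sharper, regime-specific bounds (of order $B_L^{3-\alpha}$ for $\alpha\in(1,2]$ and $B_L^2/\log N$ for $\alpha=1$) beyond the second-moment table \eqref{den2}; the four cases then require separate numerical bookkeeping. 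You instead bound the tilted mean directly: writing $\phi(\theta)=\sum_{k=0}^{B_L}e^{\theta k}/a(k)$ and using $\log\phi(\theta)-\log\phi(0)=\int_0^\theta\tilde m(s)\,ds\leq\theta\tilde m(\theta)$ together with
$$\tilde m(\lambda/B_L)\,\leq\, e^{\lambda}\,E_{\nu_{\varphi_{c,N}}}[\eta_x\mid\eta_x\leq B_L]\,\leq\, e^{\lambda}\rho_{c,N},$$
which is valid uniformly in $\alpha\geq1$ and reduces the whole lemma to the single comparison $e^{\lambda}\rho_{c,N}\leq(1-c)\rho_L$: this holds for small fixed $\lambda$ when $\alpha>2$ (since $\rho_{c,N}\to\rho_c<\liminf\rho_L$) and for $\lambda=1$ whenever $\rho_{c,N}\ll\rho_L$, which every hypothesis of Theorem \ref{conden} implies. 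Your approach thus bypasses the truncated second moments entirely, unifies the case analysis, and in fact establishes the lemma under the weaker assumption $\rho_{c,N}\ll\rho_L$ for $\alpha\in[1,2]$ (the extra $\log N$ and $(\log\log N)^{\delta}$ factors in the theorem's hypotheses are needed elsewhere, e.g.\ in Lemma \ref{E1}). Two minor remarks: your closing worry about needing $3c\lambda>\alpha$ in the regime $\alpha\in(1,2)$ dissolves, because $L\tilde m(\theta)\leq e\,L\rho_{c,N}=o(N)$ already from $\rho_{c,N}\ll\rho_L$, so the exponent is $-(3-o(1))\log N$ and beats $\alpha\log N$ with no constant chase; and your indicator $\mathbf{1}\{\eta_x\leq B_L\}$ versus the paper's strict inequality $\eta_x<B_L$ in the definition of $\xi$ is a harmless off-by-one.
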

\begin{proof}
Define 
$$T_{L}\,=\,\sum_{x=1}^L\eta_x\mathbf{1}_{\{\eta_x<B_L\}}.$$
For every $s>0$, 
\begin{equation*}
\begin{split}
\nu^L_{\varphi_{c,N}}\left[E_0\right]&\leq \, \nu^L_{\varphi_{c,N}}\left[\sum_{x=1}^L\eta_x\geq N, \xi=0\right]\\
&=\, \nu^L_{\varphi_{c,N}}\left[\sum_{x=1}^L\eta_x\geq N, M_L(\eta)<B_L \right]\\
&\leq\,  \nu^L_{\varphi_{c,N}}\left[T_{L}\geq N\right]\,\leq\, \frac{E_{\nu^L_{\varphi_{c,N}}}\left[e^{sT_{L}}\right]}{e^{sN}}.
\end{split}
\end{equation*}
In the rest part of the proof of this lemma, we always choose $s=\frac{1}{B_L}$. In particular by the above inequality, we obtain that
\begin{equation}\label{E0eq0}
\nu^L_{\varphi_{c,N}}\left[E_0\right]\leq \frac{\left(E_{\nu_{\varphi_{c,N}}}\left[e^{B_L^{-1}\eta_x\mathbf{1}_{\{\eta_x<B_L\}}}\right]\right)^L}{e^{N/B_L}}.
\end{equation}
By a summation by parts,
\begin{equation}\label{E0eq1}
\begin{split}
&E_{\nu_{\varphi_{c,N}}}\left[e^{s\eta_x\mathbf{1}_{\{\eta_x<B_L\}}}\right]\leq\,\sum_{k=0}^{B_L}\left(1+e^{sk}-1\right)\nu_{\varphi_{c,N}}\left[k\right]\\
\leq& \sum_{k=0}^{B_L}\nu_{\varphi_{c,N}}\left[k\right]\,+\, \sum_{k=0}^{B_L}e^{sk}(e^s-1)\sum_{j=k}^{B_L} \nu_{\varphi_{c,N}}[j].
\end{split}
\end{equation}

Obviously the first term at the last line  is less than $1$ since $B_L<N$. Given any $\epsilon>0$, applying Taylor expansion to the exponential function, we have for $N$ sufficiently large,
$$e^{\frac{1}{B_L}}-1\leq (1\,+\,\epsilon)B_L^{-1}.$$
Let $R(k)=\sum_{j=k}^{B_L} \nu_{\varphi_{c,N}}[j]$ for all integers $k\geq 0$. By a summation by parts once more, we can bound $ \sum_{k=0}^{B_L}e^{sk}\sum_{j=k}^N \nu_{\varphi_{c,N}}[j]$ from above by
\begin{equation}
\sum_{k=0}^{B_L}R(k)\,+\, \sum_{k=0}^{B_L}e^{sk}(e^s-1)\sum_{l=k}^{B_L} R(l).
\end{equation}
A straightforward computation shows that 
\begin{equation*}
\sum_{k=0}^{B_L}R(k)\,\leq\,\rho_{c,N}
\end{equation*}
and for all $\alpha\geq1$,
\begin{equation*}
\sum_{k=0}^{B_L}\sum_{l=k}^{B_L}R(l)\,\leq\, C\nu_{\varphi_c, N}[\eta_x^2].
\end{equation*}
Recall that the order of $\nu_{\varphi_c, N}[\eta_x^2]$ has been given in \eqref{den2}. However we need a sharper upper bound in the case $\alpha\in[1,2]$:
\begin{equation}
\sum_{k=0}^{B_L}\sum_{l=k}^{B_L}R(l)\,=\,
\begin{cases}
O(B_L^{3-\alpha}) &\text{if} \quad \alpha\in (1,2]\\
O(\frac{B_L^2}{\log N}) &\text{if} \quad \alpha=1
\end{cases}
\end{equation}

If $\alpha>2$, the second term in the last line of \eqref{E0eq1} is bounded by
$$(1\,+\,\epsilon)s\left[\rho_{c,N}\,+\,(1+\epsilon)s\sum_{k=0}^{B_L}\sum_{l=k}^{B_L}R(l)\right]\,\leq\, \frac{(\log N)^2}{N}(\rho_c+2\epsilon),$$
for sufficiently large $N$, if $\epsilon$ is small enough. Moreover we could choose $\epsilon>0$ even smaller if necessary such that $\rho_c+3\epsilon<\liminf_{L\to\infty} \rho_L$. From \eqref{E0eq0} and the elementary inequality 
$$(1+x)^L\leq e^{Lx}, \quad x\geq 0,$$
we have for sufficiently large $L$,
$$\nu^L_{\varphi_{c,N}}\left[E_0\right]\,\leq\, \frac{exp\left\{\frac{L(\log N)^2}{N}(\rho_c+2\epsilon)\right\}}{exp\left\{(\log N)^2\right\}}\,\leq\,exp\left\{-\frac{\epsilon}{\rho_c+3\epsilon}(\log N)^2\right\}\ll \frac{L}{N^\alpha}.$$

 If $\alpha=2$, then an upper bound of the second term in the last line of \eqref{E0eq1} is 
$$Cs\left[\rho_{c,N}\,+\,Cs\sum_{k=0}^{B_L}\sum_{l=k}^{B_L}R(l)\right]\,\leq\,Cs\left[\log N+s B_L\right]\,\leq\,C\frac{\rho_L^{\frac{1}{4}}(\log N)^{\frac{7}{4}}}{N}.$$
. Recall that $\rho_L\gg \log N$, for sufficiently large $L$, 
$$\nu^L_{\varphi_{c,N}}\left[E_0\right]\,\leq\, \frac{exp\left\{C\frac{L\rho_L^{\frac{1}{4}}(\log N)^{\frac{7}{4}}}{N}\right\}}{exp\left\{\rho_L^{\frac{1}{4}}(\log N)^{\frac{3}{4}}\right\}}\ll \frac{L}{N^2}.$$

If $\alpha\in(1,2)$, the second term in the last line of \eqref{E0eq1} is less than or equal to
$$Cs\left[\rho_{c,N}\,+\,Cs\sum_{k=0}^{B_L}\sum_{l=k}^{B_L}R(l)\right]\,\leq\,Cs\left[N^{2-\alpha}+s B_L^{3-\alpha}\right]\,\leq\,CN^{1-\alpha}\log N.$$
Since $\rho_L\gg N^{2-\alpha}$, we conclude that, for sufficiently large $L$,
$$\nu^L_{\varphi_{c,N}}\left[E_0\right]\,\leq\,\frac{exp\left\{CLN^{1-\alpha}\log N\right\}}{exp\{3\log N\}}\ll\, \frac{L}{N^\alpha}.$$

If $\alpha=1$, the second term in the last line of \eqref{E0eq1} is bounded from above by 
$$Cs\left[\rho_{c,N}\,+\,Cs\sum_{k=0}^{B_L}\sum_{l=k}^{B_L}R(l)\right]\,\leq\,Cs\left[\frac{N}{\log N}+ \frac{sB_L^2}{\log N}\right]\,\leq\,C(\log\log N)^{\delta/2}.$$
From $\rho_L\gg \frac{N}{\log N}$ we can obtain that
$$\nu^L_{\varphi_{c,N}}\left[E_0\right]\,\leq\, \frac{exp\left\{CL(\log\log N)^{\delta/2}\right\}}{exp\left\{\log N(\log\log N)^{\delta/2}\right\}}\ll \frac{L}{N\log N},$$
for sufficiently large $L$.
\end{proof}

\begin{lemma}\label{E1}
$$\lim_{L\to\infty}\frac{\nu^L_{\varphi_{c,N}}\left[E_1\right]}{L\nu_{\varphi_{c,N}}\left[N-(L-1)\rho_{c,N}\right]}\,=\,1$$
\end{lemma}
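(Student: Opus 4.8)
The plan is to use the exchangeability of the product measure to reduce $\nu^L_{\varphi_{c,N}}[E_1]$ to a one-dimensional sum over the occupation $m$ of the unique ``large'' site, and then to show that this sum concentrates at $m\approx\mu^\ast:=N-(L-1)\rho_{c,N}$, which is of order $\Theta(N)$. Abbreviate $p(m):=\nu_{\varphi_{c,N}}[m]=1/(Z_N(\varphi_c)\,m^\alpha)$ (recall $\varphi_c=1$, $a(m)=m^\alpha$), a decreasing function of $m\geq1$, and write $S_{L-1}:=\sum_{x=1}^{L-1}\eta_x$. Since on $E_1$ exactly one site carries at least $B_L$ particles, decomposing according to which site it is and conditioning on its occupation gives
\begin{equation}\label{planE1}
\frac{1}{L}\,\nu^L_{\varphi_{c,N}}[E_1]\,=\,\sum_{m\geq B_L} p(m)\,Q(N-m),\qquad Q(s)\,:=\,\nu^{L-1}_{\varphi_{c,N}}\Big[\max_{1\leq x\leq L-1}\eta_x<B_L,\ S_{L-1}=s\Big].
\end{equation}
Repeating the computation in \eqref{eqexp} with $t_L$ replaced by $B_L$ shows, under the hypotheses on $\rho_L$ and for the given $B_L$, that $\sum_s Q(s)=\nu^{L-1}_{\varphi_{c,N}}[\max_{x<L}\eta_x<B_L]\to1$; combined with Chebyshev's inequality \eqref{claim2} this yields, for any $a_L\ll C_L\ll N$,
\begin{equation}\label{planconc}
\nu^{L-1}_{\varphi_{c,N}}\Big[\max_{x<L}\eta_x<B_L,\ \big|S_{L-1}-(L-1)\rho_{c,N}\big|\leq C_L\Big]\,\longrightarrow\,1.
\end{equation}

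For the lower bound I would keep only the window $W:=\{m:|m-\mu^\ast|\leq C_L\}$ in \eqref{planE1}; every such $m$ exceeds $B_L$ for large $L$ since $\mu^\ast=\Theta(N)$. On $W$ one has $p(m)/p(\mu^\ast)=(\mu^\ast/m)^\alpha=1+o(1)$ uniformly, because $C_L\ll N$, exactly as in \eqref{eqasmp}. Hence $\sum_{m\in W}p(m)Q(N-m)\geq(1-o(1))p(\mu^\ast)\sum_{m\in W}Q(N-m)$, and $\sum_{m\in W}Q(N-m)$ is precisely the probability in \eqref{planconc} (after the substitution $S_{L-1}=N-m$), which tends to $1$. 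This gives $\liminf_L\nu^L_{\varphi_{c,N}}[E_1]/\big(Lp(\mu^\ast)\big)\geq1$.

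For the matching upper bound I would split the sum over $m$ into $W$, the undershoot region $\{m>\mu^\ast+C_L\}$, and the overshoot region $\{B_L\leq m<\mu^\ast-C_L\}$, cutting the latter at a fixed fraction $(1-\theta_0)N$. On $W$, $p(m)\leq(1+o(1))p(\mu^\ast)$ and $\sum_{m\in W}Q(N-m)\leq1$, so this part contributes at most $(1+o(1))p(\mu^\ast)$. In the undershoot region $S_{L-1}<(L-1)\rho_{c,N}-C_L$ and $p(m)\leq p(\mu^\ast)$ by monotonicity, so \eqref{claim2} bounds this part by $o(p(\mu^\ast))$. In the moderate overshoot range $(1-\theta_0)N\leq m<\mu^\ast-C_L$ one still has $p(m)\leq Cp(\mu^\ast)$ since $m=\Theta(N)$, while $S_{L-1}$ now exceeds its mean by at least $C_L\gg a_L$, so again \eqref{claim2} makes this contribution $o(p(\mu^\ast))$.

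The deep overshoot region $B_L\leq m<(1-\theta_0)N$ is the main obstacle, and is where the choice of $B_L$ (hence the hypotheses on $\rho_L$) is essential: here the condensate can be as small as $B_L$, so $p(m)$ is as large as $p(B_L)$ with $p(B_L)/p(\mu^\ast)=(\mu^\ast/B_L)^\alpha=O((N/B_L)^\alpha)$, while the remaining $L-1$ sites, each capped below $B_L$, must collectively carry more than $\theta_0N$ particles. I would bound $p(m)\leq p(B_L)$ and, with $T_{L-1}:=\sum_{x<L}\eta_x\mathbf{1}_{\{\eta_x<B_L\}}$, estimate $\sum_{m<(1-\theta_0)N}Q(N-m)\leq\nu^{L-1}_{\varphi_{c,N}}[T_{L-1}\geq\theta_0N]$ by the exponential Chebyshev bound of Lemma \ref{E0} (cf. \eqref{E0eq0}) with the threshold $\theta_0N$ in place of $N$. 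Choosing $\theta_0$ with $\limsup_L(L-1)\rho_{c,N}/N<\theta_0<1$ --- possible since the hypotheses force $(L-1)\rho_{c,N}=o(N)$ for $\alpha\in[1,2]$ and $\limsup_L(L-1)\rho_{c,N}/N=\rho_c/\liminf_L\rho_L<1$ for $\alpha>2$ --- keeps $\theta_0N$ strictly above the mean $(L-1)\rho_{c,N}$ of $T_{L-1}$, so the moment-generating-function estimates of Lemma \ref{E0} give $\nu^{L-1}_{\varphi_{c,N}}[T_{L-1}\geq\theta_0N]=\exp(-\Theta(N/B_L))$. Since $N/B_L\to\infty$, the product obeys $p(B_L)\,\nu^{L-1}_{\varphi_{c,N}}[T_{L-1}\geq\theta_0N]/p(\mu^\ast)=O\big((N/B_L)^\alpha e^{-\Theta(N/B_L)}\big)\to0$, so this part is $o(p(\mu^\ast))$ as well. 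Adding the four pieces gives $\limsup_L\nu^L_{\varphi_{c,N}}[E_1]/\big(Lp(\mu^\ast)\big)\leq1$, which together with the lower bound proves the lemma. The one genuinely delicate point is thus balancing the large weight $p(B_L)$ against the large-deviation cost of forcing a truncated sum of order $N$ out of $L-1$ capped sites; everything else reduces to the concentration estimate \eqref{planconc} and the monotonicity of $p$.
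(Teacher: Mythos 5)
Your proposal is correct, and its skeleton coincides with the paper's: translation invariance reduces $\nu^L_{\varphi_{c,N}}[E_1]/L$ to the one-dimensional sum $\sum_m \nu_{\varphi_{c,N}}[m]\,Q(N-m)$, the window $|m-\mu^\ast|\leq C_L$ gives the main term via \eqref{eqasmp}, \eqref{claim1} (with $B_L$ in place of $t_L$) and \eqref{claim2} (this is the paper's $A_2$), and the undershoot region is killed by monotonicity of $\nu_{\varphi_{c,N}}[\cdot]$ plus \eqref{claim2} (the paper's $A_1$). Where you genuinely diverge is the overshoot region (the paper's $A_3$). The paper bounds all of $A_3$ at once by $\nu_{\varphi_{c,N}}[B_L]$ times a single deviation probability at threshold $(L-1)\rho_{c,N}+C_L$, and to make the exponential Chebyshev bound beat the polynomially large prefactor $\nu_{\varphi_{c,N}}[B_L]/\nu_{\varphi_{c,N}}[\mu^\ast]$ it must tune $C_L$ case by case ($C_L=N(\log N/\rho_L)^{1/5}$ for $\alpha=2$, $N(\log N)^{-1/2}$ for $\alpha\in(1,2)$, $N(\log\log N)^{-\delta/3}$ for $\alpha=1$), while for $\alpha>2$ it switches to a second-moment bound with $C_L=\sqrt{Na_L}$. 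You instead split the overshoot at $(1-\theta_0)N$: the moderate part still has $\nu_{\varphi_{c,N}}[m]\leq C\nu_{\varphi_{c,N}}[\mu^\ast]$ because $m=\Theta(N)$, so the crude $o(1)$ from \eqref{claim2} suffices, and only the deep part $B_L\leq m<(1-\theta_0)N$ needs the moment-generating-function machinery of Lemma \ref{E0}, now at the macroscopic threshold $\theta_0 N$, where the decay $\exp\{-\Theta(N/B_L)\}$ crushes the prefactor $(N/B_L)^\alpha$ with room to spare. This buys you a uniform treatment of all $\alpha\geq1$ (including $\alpha>2$, handled exponentially rather than by Chebyshev) with any $C_L$ satisfying $a_L\ll C_L\ll N$, at the cost of the one genuinely case-dependent verification, which you correctly identify and carry out: that $\theta_0$ can be chosen strictly between $\limsup_L (L-1)\rho_{c,N}/N$ and $1$, i.e.\ that $(L-1)\rho_{c,N}=o(N)$ for $\alpha\in[1,2]$ and $\rho_c/\liminf_L\rho_L<1$ for $\alpha>2$; this is exactly where the hypotheses on $\rho_L$ enter, and it also guarantees (as in Lemma \ref{E0}) that the numerator $L\,s\,[\rho_{c,N}+O(s\sum_k\sum_l R(l))]$ of the exponential bound stays a fraction strictly below $\theta_0$ of $N/B_L$. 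In short: same decomposition and same key estimates, but a cleaner, more modular split of the overshoot tail that eliminates the paper's bespoke choices of $C_L$.
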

\begin{proof}
Notice that $\nu^L_{\varphi_{c,N}}$ is translation invariant under the operator $\sigma^{x,y}$ for $x,y\in \bb T_L$,
$$\frac{1}{L}\,\nu^L_{\varphi_{c,N}}\left[E_1\right]\,=\,\nu^L_{\varphi_{c,N}}\left[\sum_{x=1}^L\eta_x=N,\,M_L(\eta)\geq B_L,\,\max_{1\leq x\leq L-1}\eta_x<B_L\right].$$
The right hand side of the above equation is equal to
\begin{equation}
\begin{split}
&\sum_{k=0}^{N-B_L} \nu_{\varphi_{c,N}}\left[N-k\right]\nu^{L-1}_{\varphi_{c,N}}\left[\sum_{x=1}^{L-1}\eta_x=k,\,\max_{1\leq x\leq L-1}\eta_x<B_L\right]\\
:=&\,A_1\,+\,A_2\,+\,A_3,
\end{split}
\end{equation}
where $A_1$ is the summation over $0\leq k< (L-1)\rho_{c,N}-C_L$, $A_2$ is the summation over $(L-1)\rho_{c,N}-C_L\leq k\leq (L-1)\rho_{c,N}+C_L$ and $A_3$ is the summation over $(L-1)\rho_{c,N}-C_L<k\leq N-B_L$.

Notice that for any fixed $N$, $\nu_{\varphi_{c,N}}[k]$ is a decreasing function of $k$, therefore
$$A_1\leq \nu_{\varphi_{c,N}}\left[N-(L-1)\rho_{c,N}+C_L\right]\nu^{L-1}_{\varphi_{c,N}}\left[\sum_{x=1}^{L-1}\eta_x\leq(L-1)\rho_{c,N}-C_L\right].$$
By $\eqref{claim2}$ and $C_L\ll N-(L-1)\rho_{c,N}$, we obtain that $A_1\ll \nu_{\varphi_{c,N}}\left[N-(L-1)\rho_{c,N}\right]$. 

We claim that 
$$\lim_{L\to\infty}\nu_{\varphi_{c,N}}^{L-1}\left[\max_{1\leq x\leq L-1}\eta_x\leq B_L\right]\,=\,1.$$
Similar to the proof of \eqref{claim1}, we just need to verify
\begin{equation}
\lim_{L\to\infty} \frac{L\sum_{k=B_L}^N\frac{1}{a(k)}}{Z_N(\varphi_c)}\,=\,0.
\end{equation}
This equation follows from the assumption on $\rho_L$ and the two estimates after equation \eqref{eqexp} with $B_L$ in place of $t_L$. By the claim and repeating the same estimate to equation \eqref{eqA} in the proof of Theorem \ref{conden} while choosing $A=X_{L-1}$, we get
$$A_2\,=\,\nu_{\varphi_{c,N}}\left[N-(L-1)\rho_{c,N}\right]\left(1+o(1)\right).$$

It remains to show that $A_3\ll \nu_{\varphi_{c,N}}\left[N-(L-1)\rho_{c,N}\right]$. 
Since $\nu_{\varphi_{c,N}}[\cdot]$ is decreasing,
$$A_3\leq \nu_{\varphi_{c,N}}\left[B_L\right]\nu^{L-1}_{\varphi_{c,N}}\left[\sum_{x=1}^{L-1}\eta_x\geq(L-1)\rho_{c,N}+C_L,\,\max_{1\leq x\leq L-1}\eta_x<B_L \right].$$

If $\alpha\in[1,2]$, by the spirit of the proof of Lemma \ref{E0}, there exists a small $\epsilon>0$ such that, for sufficiently large $N$, the second term at the right hand side of the above inequality is bounded by
$$\frac{exp\left\{L(1\,+\,\epsilon)B_L^{-1}\left[\rho_{c,N}\,+\,(1+\epsilon)B_L^{-1}\sum_{k=0}^{B_L}\sum_{l=k}^{B_L}R(l)\right]\right\}}{exp\{[(L-1)\rho_{c,N}+C_L]/B_L\}},$$
which is of order
\begin{equation}
\begin{cases}
o(1/N), &\text{if} \quad \alpha=2\\
O(exp\{-\sqrt{\log N}\}), &\text{if} \quad \alpha\in (1,2)\\
o(1/N), &\text{if} \quad \alpha=1
\end{cases}
\end{equation}
if we choose 
\begin{equation}
C_L\,=\,
\begin{cases}
N\left(\frac{\log N}{\rho_L}\right)^{1/5}, &\text{if} \quad \alpha=2\\
N(\log N)^{-1/2}, &\text{if} \quad \alpha\in (1,2)\\
N(\log\log N)^{-\delta/3}, &\text{if} \quad \alpha=1
\end{cases}
\end{equation}

If $\alpha>2$, then $A_3$ is trivially bounded by 
$$\nu_{\varphi_{c,N}}\left[B_L\right]\nu^{L-1}_{\varphi_{c,N}}\left[\sum_{x=1}^{L-1}\eta_x\geq(L-1)\rho_{c,N}+C_L\right].$$
Choose $C_L=\sqrt{Na_L}$. The second term has been proved in the proof of claim \eqref{claim2} 
smaller than or equal to $L\nu_{\varphi_c, N}[\eta_x^2]C_L^{-2}$ and the order of $nu_{\varphi_c, N}$ has been given in \eqref{den2}.

The desired bound of $A_3$ follows from the above estimates and the assumption on $\rho_L$  after a straightforward computation.
\end{proof}

\begin{lemma}\label{E2}
$$\lim_{L\to\infty}\frac{\nu^L_{\varphi_{c,N}}\left[E_2\right]}{L\nu_{\varphi_{c,N}}\left[N-(L-1)\rho_{c,N}\right]}\,=\,0.$$
\end{lemma}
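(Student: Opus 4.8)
The plan is to bound $\nu^L_{\varphi_{c,N}}[E_2]$ by a union bound over the (at least two) sites carrying more than $B_L$ particles, and then to exploit, as in the proof of Lemma \ref{E1}, that the remaining sites concentrate their mass near the mean $(L-2)\rho_{c,N}$, which is of smaller order than $N$. Since $\nu^L_{\varphi_{c,N}}$ is exchangeable, a union bound over the unordered pairs of heavy sites gives
$$\nu^L_{\varphi_{c,N}}[E_2]\,\leq\,\binom{L}{2}\,\nu^L_{\varphi_{c,N}}\Big[\eta_1\geq B_L,\ \eta_2\geq B_L,\ \sum_{x=1}^L\eta_x=N\Big].$$
Conditioning on $\eta_1=j$ and $\eta_2=k$ and summing over the value $m=N-j-k$ of the bulk sum $\sum_{x=3}^L\eta_x$, the right-hand side equals
$$\binom{L}{2}\sum_{m\ge 0}\nu^{L-2}_{\varphi_{c,N}}\Big[\sum_{x=3}^L\eta_x=m\Big]\ \sum_{\substack{j+k=N-m\\ j,k\ge B_L}}\nu_{\varphi_{c,N}}[j]\,\nu_{\varphi_{c,N}}[k].$$

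First I would estimate the inner two-tail convolution. Writing $n=N-m$ and using $a(k)=k^\alpha$, this convolution vanishes for $n<2B_L$, and for $n\ge 2B_L$ it is dominated by the terms in which one coordinate is close to the threshold $B_L$ and the other is close to $n$. A direct computation then yields, for $\alpha>1$, a bound of order $Z_N(\varphi_c)^{-2}B_L^{1-\alpha}n^{-\alpha}$, and for $\alpha=1$ a bound of order $Z_N(\varphi_c)^{-2}n^{-1}\log(n/B_L)$.

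Next I would split the sum over the bulk value $m$ at $m=N/2$. On the typical range $m\le N/2$ one has $n=N-m\ge N/2$, so $n^{-\alpha}$ is of order $N^{-\alpha}$ (resp. $n^{-1}\log(n/B_L)$ of order $N^{-1}\log(N/B_L)$ when $\alpha=1$); since $\sum_m\nu^{L-2}_{\varphi_{c,N}}[\sum_{x=3}^L\eta_x=m]\le 1$, this part contributes at most a term of order $\binom{L}{2}Z_N(\varphi_c)^{-2}B_L^{1-\alpha}N^{-\alpha}$ (resp. its $\alpha=1$ analogue). On the atypical range $m>N/2$ I would bound the tail factor by its value at $n=2B_L$ and use the bulk concentration estimate, now applied to the $L-2$ sites $3,\dots,L$: by Chebyshev's inequality, $\nu^{L-2}_{\varphi_{c,N}}[\sum_{x=3}^L\eta_x>N/2]\le C\,L\,\nu_{\varphi_{c,N}}[\eta_x^2]/N^2$, whose order is given by \eqref{den2}, and $L\,\nu_{\varphi_{c,N}}[\eta_x^2]/N^2=\Theta(a_L^2/N^2)=o(1)$ since $a_L\ll N$. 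This renders the atypical part of strictly smaller order than the typical one in every regime.

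Finally I would compare with the denominator $L\,\nu_{\varphi_{c,N}}[N-(L-1)\rho_{c,N}]$, using the lower bounds stated just before Lemma \ref{E0}, namely $\nu_{\varphi_{c,N}}[N-(L-1)\rho_{c,N}]\ge C N^{-\alpha}$ for $\alpha>1$ and $\ge C(N\log N)^{-1}$ for $\alpha=1$. The decisive point is that the factor $N^{-\alpha}$ coming from the typical range cancels exactly against the $N^{\alpha}$ in the denominator, so that the whole quotient reduces to a multiple of $L\,B_L^{1-\alpha}$ when $\alpha>1$, and of $L\log(N/B_L)/\log N$ when $\alpha=1$. Recalling $L=\Theta(N/\rho_L)$ together with the definition of $B_L$ and the regime-dependent hypotheses on $\rho_L$ in Theorem \ref{conden}, each of these quantities tends to $0$. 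I expect the main obstacle to be exactly this cancellation: a naive uniform local-limit bound of the form $\nu^{L-2}_{\varphi_{c,N}}[\sum_{x=3}^L\eta_x=m]\le C/a_L$ is not sharp enough (it already fails at $\alpha=2$, where it leaves a spurious factor $(\log N)^{3/2}$), so one genuinely must use that the bulk sum concentrates at the level $(L-2)\rho_{c,N}\ll N$, whence the price $(N-m)^{-\alpha}$ of placing the missing mass is essentially $N^{-\alpha}$.
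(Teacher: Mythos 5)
Your set-up (exchangeability, conditioning on the bulk sum, the two-tail convolution bound $C\,Z_N(\varphi_c)^{-2}B_L^{1-\alpha}n^{-\alpha}$ for $\alpha>1$, and the typical/atypical split) is sound, and your typical-range conclusion is correct in every regime: the quotient contribution is $\Theta(L B_L^{1-\alpha})$ for $\alpha>1$ and $\Theta(L\log(N/B_L)/\log N)$ for $\alpha=1$, and these vanish under the hypotheses of Theorem \ref{conden}. The genuine gap is in the atypical range $m>N/2$, where your claim that it is ``of strictly smaller order than the typical one in every regime'' fails. Your bound there, relative to the denominator, is of order $L\,B_L^{1-2\alpha}a_L^2N^{\alpha-2}$ for $\alpha>1$. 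At $\alpha=2$, with $a_L^2=\Theta(LN)$, $B_L=N\rho_L^{-1/4}(\log N)^{-3/4}$ and $L=\Theta(N/\rho_L)$, this is $\Theta\bigl((\log N)^{9/4}\rho_L^{-5/4}\bigr)$, which diverges for any admissible $\rho_L=(\log N)\,\omega_L$ with $\omega_L\to\infty$ slower than $(\log N)^{4/5}$ --- the theorem only assumes $\rho_L\gg\rho_{c,N}=\Theta(\log N)$. Likewise one gets $\Theta\bigl((\log N)^{2\alpha-3}\omega_L^{-2}\bigr)$ for $\alpha\in(3/2,2)$, divergent for slowly growing $\omega_L$, and at $\alpha=1$ the bound is $\Theta\bigl(L^2(\log\log N)^{\delta/2}/\log N\bigr)$, which can be as large as $\log N\,(\log\log N)^{-3\delta/2}$. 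The loss is structural, not a matter of constants: for $\alpha\in(1,3)$ your Chebyshev estimate $a_L^2/N^2=\Theta(LN^{1-\alpha})$ is in fact the \emph{correct} order of $\nu^{L-2}_{\varphi_{c,N}}[\sum_{x=3}^L\eta_x>N/2]$ (it matches the single-big-jump probability $L\,\nu_{\varphi_{c,N}}[\eta\geq cN]$), but that tail mass is carried by configurations with one huge bulk site; on the event $\eta_1+\eta_2=n\approx 2B_L$ the bulk must hit the \emph{point} value $N-n$, whose probability is of LLT size $\Theta(LN^{-\alpha})$, smaller by a factor $N$. Pairing the full tail mass with the worst convolution value $n\approx 2B_L$ loses exactly this factor. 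To close the gap you need a pointwise a priori bound $\nu^{L-2}_{\varphi_{c,N}}[\sum_{x=3}^L\eta_x=N-n]=O(LN^{-\alpha})$ (resp.\ $O(L/(N\log N))$ at $\alpha=1$) uniformly over $2B_L\leq n\leq N/2$ --- a rough upper-bound version of Theorem \ref{lclt}, in the spirit of \cite{d89}, or obtained by iterating the heavy-site decomposition together with the exponential estimate of Lemma \ref{E0}; with it the atypical part becomes $O(L^2B_L^{2-2\alpha})$, which does vanish in all regimes. No second-moment argument alone can be rearranged to work here.

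For comparison, the paper's own proof is blunter than yours: it discards the bulk factor entirely via $\sum_k\nu^{L-2}_{\varphi_{c,N}}[\cdot]\leq 1$ and bounds $\sup_{0\leq k\leq N}\nu^2_{\varphi_{c,N}}[\eta_1\geq B_L,\,\eta_2\geq B_L,\,\eta_1+\eta_2=k]$ by $\bigl(\nu_{\varphi_{c,N}}[\eta_x=B_L]\bigr)^2$, then checks $L\bigl(\nu_{\varphi_{c,N}}[B_L]\bigr)^2N^{\alpha}\to0$. Your instinct that uniform bounds are too naive here is well taken and applies to that step as well: at $k\asymp 3B_L$ the convolution has $\Theta(B_L)$ terms each of size $\Theta\bigl((\nu_{\varphi_{c,N}}[B_L])^2\bigr)$, so the supremum is really of order $B_L\bigl(\nu_{\varphi_{c,N}}[B_L]\bigr)^2$, and with this correction the paper's chain no longer yields $o(1)$ for $\alpha\in[1,2]$. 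So your refinement --- retaining the bulk probability and exploiting its concentration at $(L-2)\rho_{c,N}\ll N$ --- is the right direction and handles the typical range completely; the one missing, and essential, ingredient in both your argument and the paper's is the LLT-size pointwise control of the bulk sum at anomalously high values.
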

\begin{proof}
Since  $\nu^L_{\varphi_{c,N}}$ is translation invariant under the operator $\sigma^{x,y}$ for $x,y\in \bb T_L$, $\nu^L_{\varphi_{c,N}}\left[E_2\right]$ is less than
\begin{equation}
\begin{split}
&L^2 \nu^L_{\varphi_{c,N}}\left[\sum_{x=1}^L\eta_x=N, \,\eta_L\geq B_L, \,\eta_{L-1}\geq B_L\right]\\
\leq\, &L^2 \sum_{k=0}^N \nu^{L-2}_{\varphi_{c,N}}\left[\sum_{x=1}^{L-2}\eta_x=N-k\right] \nu^2_{\varphi_{c,N}}\left[\eta_1\geq B_L,\, \eta_2\geq B_L,\, \eta_1+\eta_2=k\right]\\
\leq\, & L^2\sup_{0\leq k\leq N}\nu^2_{\varphi_{c,N}}\left[\eta_1\geq B_L,\, \eta_2\geq B_L,\, \eta_1+\eta_2=k\right]\\
\leq\, & L^2\left(\nu_{\varphi_{c,N}}\left[\eta_x= B_L\right]\right)^2.
\end{split}
\end{equation}
A direct computation shows that 
\begin{equation}
\frac{L^2\left(\nu_{\varphi_{c,N}}\left[\eta_x= B_L\right]\right)^2}{L\nu_{\varphi_{c,N}}\left[N-(L-1)\rho_{c,N}\right]}\,=\,
\begin{cases}
O(\frac{(\log N)^{4\alpha}}{N^{\alpha-1}}),  &\text{if} \quad \alpha>2\\
%O(\frac{(\log N)^3}{N}), &\text{if} \quad \alpha=2\\
O(\frac{(\log N)^{2\alpha-1}}{N}), &\text{if} \quad \alpha\in [1,2]\\
%O(\frac{(\log N)^2}{N}), &\text{if} \quad \alpha=1
\end{cases}
\end{equation}
which yields the lemma.
\end{proof}
\end{section}
\smallskip\noindent{\bf Acknowledgments.} We would like to thank Milton Jara for useful discussion about this problem, especially the suggestion on the proof of Theorem \ref{lclt}. The author also thanks the finanical support of FAPESP Grant No.2019/02226-2.

\end{document}